\documentclass[11pt]{amsart}
\usepackage[T1]{fontenc}
\usepackage{lmodern}
\usepackage{amsmath,amssymb,bm,microtype}
\usepackage{needspace}
\usepackage[hidelinks]{hyperref}
\newtheorem{theorem}{Theorem}[section]
\newtheorem{conjecture}[theorem]{Conjecture}
\newtheorem{lemma}[theorem]{Lemma}
\theoremstyle{remark}
\newtheorem{remark}[theorem]{Remark}
\numberwithin{equation}{section}
\newcommand{\Z}{\mathbb Z}
\newcommand{\Q}{\mathbb Q}
\newcommand{\R}{\mathbb R}
\newcommand{\Li}{\mathcal L_1}
\newcommand{\Lj}{\mathcal L_2}
\newcommand{\PP}{\bm P}
\newcommand{\QQ}{\bm Q}
\newcommand{\MM}{\mathcal M}
\title[Error terms for sums of two linear transformations]
{Error terms for sums of two linear transformations}
\author{Xiao-Hui Yan}
\address{School of Mathematics and Statistics, Anhui Normal University,
Wuhu 241002, P. R. China}
\email{yanxiaohui\_1992@163.com}
\keywords{Sumset, linear transformation, discrete Brunn--Minkowski inequality}
\date{}
\begin{document}
\begin{abstract}
Let $d$ be a positive integer, and let $\mathcal L_1,\mathcal L_2$ be
irreducible and coprime linear transformations from $\Z^d$ to
$\Z^d$. In this paper, we prove that, for every finite nonempty set
$A\subseteq\Z^d$,
\[
 |\mathcal L_1A+\mathcal L_2A|
 \ge \left(|\det\mathcal L_1|^{1/d}+|\det\mathcal L_2|^{1/d}\right)^d|A|-O\left(|A|^{1-1/d}\right).
\]
This confirms the conjecture of Conlon and Lim on the error term
for two linear transformations.
\end{abstract}
\maketitle

\section{Introduction}

For finite subsets $A,B$ of a vector space, let
\[
 A+B=\{\bm a+\bm b:\bm a\in A,\ \bm b\in B\}.
\]
For a linear transformation $\mathcal L$, let $\mathcal LA=\{\mathcal L\bm a:\bm a\in A\}$.
We write $|A|$ for the cardinality of $A$.
We identify a linear transformation with its matrix in the standard
basis, using the same matrix over $\Q$ or $\R$ as appropriate.
Invertibility of a transformation from $\Z^d$ to $\Z^d$ is understood
over $\Q$.

Bukh \cite[Theorem 2]{Bukh} proved that, for fixed integers
$\lambda_1,\ldots,\lambda_k$ with
$\gcd(\lambda_1,\ldots,\lambda_k)=1$,
\[
 |\lambda_1\cdot B+\cdots+\lambda_k\cdot B|
 \ge (|\lambda_1|+\cdots+|\lambda_k|)|B|-o(|B|)
\]
for every finite nonempty set $B\subseteq\Z$, where
$\lambda\cdot B=\{\lambda b:b\in B\}$. Later, Balog and Shakan
\cite[Theorem 1.1]{BS} improved the error term $o(|B|)$ to a constant depending only on $\lambda_1,\lambda_2$ when $k=2$
and $\lambda_1,\lambda_2$ are distinct positive coprime integers. Shakan
\cite[Theorem 1.1]{Shakan} extended this result to arbitrary
coprime integer coefficients $\lambda_1,\ldots,\lambda_k$.

In this paper, we consider sums of the form
$\mathcal L_1A+\cdots+\mathcal L_kA$, where $A\subseteq\Z^d$ and
$\mathcal L_1,\ldots,\mathcal L_k$ are linear transformations from
$\Z^d$ to $\Z^d$. Following the definitions of Conlon and Lim \cite[Definitions 1.2 and 1.3]{CL}, we call these linear transformations
$\mathcal L_1,\ldots,\mathcal L_k$
\emph{irreducible} if there are no subspaces $U,V\subseteq\Q^d$
with $0<\dim U=\dim V<d$ such that $\mathcal L_iU\subseteq V$ for
all $i$. They are \emph{coprime} if there are no invertible rational
matrices $\bm P,\bm Q$ with $0<|\det\bm P\det\bm Q|<1$ such that
$\bm P\mathcal L_i\bm Q$ has integer entries
for every $i$. Conlon and Lim {\cite[Conjecture 1.4]{CL}} posed the following conjecture.

\begin{conjecture}\cite[Conjecture 1.4]{CL}
\label{conj:CL-asymptotic}
Let $\mathcal L_1,\ldots,\mathcal L_k$ be irreducible and coprime
linear transformations from $\Z^d$ to $\Z^d$.
Then, for every finite nonempty set $A\subseteq\Z^d$,
\[
 |\mathcal L_1A+\cdots+\mathcal L_kA|
 \ge\left(\sum_{i=1}^k|\det\mathcal L_i|^{1/d}\right)^d|A|-o(|A|).
\]
\end{conjecture}

This is a modified version of a conjecture first posed on Bukh's
webpage and later reiterated by several other authors
\cite{KP2020,Mudgal,Shakan}. The coefficient is suggested by the Brunn--Minkowski inequality;
see \cite{GardnerBM}. Conlon and Lim \cite[Section 1]{CL} showed the conditions are both necessary.

Conlon and Lim \cite[Theorem 1.5]{CL} proved
Conjecture~\ref{conj:CL-asymptotic} for two transformations.

\begin{theorem}\cite[Theorem 1.5]{CL}
\label{thm:Conlon-Lim}
Let $\mathcal L_1,\mathcal L_2$ be irreducible and coprime
linear transformations from $\Z^d$ to $\Z^d$.
Then there are constants $D,\sigma>0$ such that, for any finite subset $A\subseteq\Z^d$,
\[
 |\mathcal L_1A+\mathcal L_2A|
 \ge\left(|\det\mathcal L_1|^{1/d}+|\det\mathcal L_2|^{1/d}\right)^d|A|
      -D|A|^{1-\sigma}.
\]
\end{theorem}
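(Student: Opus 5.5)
The statement just given is the theorem of Conlon and Lim, Theorem~\ref{thm:Conlon-Lim}, with some error exponent $\sigma>0$. I would prove it by reducing, via Freiman-type structure, to the case where $A$ is essentially a large convex body intersected with a lattice. There the continuous Brunn--Minkowski inequality gives the main term, and boundary effects contribute a power saving.

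\textbf{Step 1: small doubling.} Set $\lambda=(|\det\Li|^{1/d}+|\det\Lj|^{1/d})^d$. We may assume $|\Li A+\Lj A|\le \lambda|A|$, since otherwise there is nothing to prove. Then Plünnecke--Ruzsa gives $|A+A|\le K|A|$ with $K$ depending only on $\Li,\Lj$: the maps are invertible over $\Q$, as irreducibility with $\det=0$ fails on kernels, or we reduce. Freiman's theorem in its quantitative form (Sanders, or Green--Tao for torsion-free groups) then places a subset $A'\subseteq A$ of density at least $|A|^{-o(1)}$ inside a generalized arithmetic progression.

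For a genuinely polynomial error I would instead use a density-increment / dichotomy. Either $A$ has a $1-|A|^{-c}$ proportion in a proper coset progression of bounded rank, or we gain. Irreducibility is used here. If $A$ concentrates near cosets of a subspace $U$ with $0<\dim U<d$, then $\Li A$ and $\Lj A$ spread along the distinct directions $\Li U\ne\Lj U$, because no common $V$ exists. This forces the sumset to be much larger than $\lambda|A|$ by a counting argument over fibres, handled inductively on $d$ via Brunn--Minkowski for projections and fibres.

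\textbf{Step 2: the lattice case.} Once $A$ is essentially $\Lambda\cap K$ for a full-rank lattice $\Lambda$ and convex $K$ of volume $\asymp|A|\det\Lambda$, we get $\Li A+\Lj A\supseteq (\Li\Lambda+\Lj\Lambda)\cap(\Li K+\Lj K)$ up to boundary losses of size $O(|A|^{1-1/d})$. Coprimality is exactly what guarantees $\det(\Li\Lambda+\Lj\Lambda)\le\det\Lambda$ in the relevant normalization, i.e. the sumset lattice is not sparser than forced. The continuous Brunn--Minkowski inequality then gives $\mathrm{vol}(\Li K+\Lj K)\ge\lambda\,\mathrm{vol}(K)$, yielding the main term.

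\textbf{Step 3: assembling.} We combine the structured piece $A'$ with the remainder $A\setminus A'$ by a stability/compression argument. Adding the remainder can only enlarge the sumset roughly additively. The loss is $D|A|^{1-\sigma}$, with $\sigma$ coming from the density exponent in Step 1.

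The main obstacle is Step 1. One must obtain a power-type density in the structural step and rule out degenerate lower-dimensional concentration using only irreducibility. That fibre analysis, together with Step 2's lattice bookkeeping via coprimality, is where I expect most of the work.
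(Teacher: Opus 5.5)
The paper does not prove this statement. It is quoted from Conlon--Lim, and it enters the paper only as the starting exponent $\beta=1-\sigma$ for Lemma~\ref{lem:bootstrap}, via \cite[Theorem 4.6]{CL}. The argument behind it, whose ingredients are reused in Section~\ref{sec:pair}, is built on a coset decomposition, not on a Freiman-type structure theorem:
\begin{itemize}
\item $A$ is split modulo $P_1,P_2$ into the $p\times q$ array $A_{ij}$.
\item The mixed sums $\Li A^j+\Lj A_i$ are bounded below by a discrete Brunn--Minkowski inequality. Its non-degeneracy hypothesis comes from irreducibility through hyperplane non-concentration; Lemmas~\ref{lem:hyperplane} and~\ref{lem:mixed} are sharpened forms of these steps.
\item Small or unbalanced parts give a gain through \eqref{eq:replace}.
\item One recurses on the parts via $\MM_i=\QQ^{-1}\mathcal L_i\PP$. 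Coprimality enters through the minimal-index normalization and the isomorphisms $\phi_h$.
\end{itemize}
Your proposal replaces all of this with Step~1, and Step~1 is not proved. It is essentially the theorem itself.

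Concretely, with bounded doubling, Freiman's theorem already puts \emph{all} of $A$ in a progression $P$ with $|P|=O(|A|)$. So the dichotomy as you state it is trivially true and gives nothing: a constant-density subset of $\Lambda\cap K$ is no easier than the original problem. What Step~2 actually needs is that $A$ coincides with a single $\Lambda\cap K$ up to $|A|^{1-c}$ points. That is a polynomial stability theorem for the very inequality you are proving, and the ``density increment'' and ``counting over fibres'' are named but not carried out.

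Worse, that form is false for near-extremal sets. Take $d=1$, $\Li=1$, $\Lj=3$ and $A=\{0,1\}+3\cdot\{0,\dots,k\}$. Then $\Li A+\Lj A=\{0,1\}+3\cdot\{0,\dots,4k+1\}$, so $|\Li A+\Lj A|=4|A|-4$. Yet $A$ fills only about $2/3$ of its hull and splits evenly over two cosets of $3\Z$. Near-extremal sets can be unions of several translates $\bm x_a+(\Lambda\cap K_a)$. The size of the sumset then depends on which cosets of $\Li\Lambda+\Lj\Lambda$ the cross terms $\Li\bm x_a+\Lj\bm x_b$ land in.

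Your observation that coprimality gives $\det(\Li\Lambda+\Lj\Lambda)\le\det\Lambda$ is correct, but it covers only the single-coset case. The multi-coset bookkeeping is exactly what the actual proof consists of: the no-growth condition \eqref{eq:no-growth}, \cite[Lemma 4.4]{CL}, and the array balancing. Finally, even in the single-coset case, the boundary loss $O(|A|^{1-1/d})$ in Step~2 requires $K$ to be well-rounded relative to $\Lambda$. That needs a quantitative non-concentration bound such as Lemma~\ref{lem:hyperplane}, not just the qualitative fact $\Li U\ne\Lj U$.
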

Lopez Bruch, Jing and Mudgal \cite{LBJM} obtained related bounds
for sums of several linear transformations. More recently, Conlon and Lim \cite[Corollary 1.8]{Algebraic}
determined the optimal leading coefficient for sums of two
irreducible, coprime transformations, with an $o(|A|)$ error term.

Conlon and Lim \cite[Conjecture~6.2]{CL} and Shakan \cite[Question~1.2]{Shakan} all conjectured that the error term may be $O\left(|A|^{1-1/d}\right)$. In this paper, we obtain this error term for two transformations.

\begin{theorem}\label{thm:main}
Let $\mathcal L_1,\mathcal L_2$ be irreducible and coprime
linear transformations from $\Z^d$ to $\Z^d$.
Then
\[
 |\Li A+\Lj A|\ge\left(|\det\mathcal L_1|^{1/d}+|\det\mathcal L_2|^{1/d}\right)^d|A|-O\left(|A|^{1-1/d}\right)
\]
for every finite nonempty set $A\subseteq\Z^d$.
\end{theorem}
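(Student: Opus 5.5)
We follow the Balog--Shakan strategy, upgraded to $\Z^d$, and use Theorem~\ref{thm:Conlon-Lim} only as a crude a priori input. The argument is a compression/induction on $|A|$ rather than a direct continuous approximation; continuous approximation alone seems to lose more than $|A|^{1-1/d}$.

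\emph{Reduction.} We may assume $\Li$ and $\Lj$ are invertible: otherwise irreducibility forces degenerate cases that are handled directly, or the bound is trivial. Put $\Gamma=\Lj^{-1}\Li\Z^d\cap\Z^d$, a sublattice of finite index $m$. Let $r_1,\dots,r_m$ be coset representatives of $\Z^d/\Gamma$, and decompose $A=\bigcup_i A_i$ with $A_i=A\cap(r_i+\Gamma)$. Then
\[
\Li A+\Lj A=\bigcup_{i,j}(\Li A_i+\Lj A_j),
\]
and sums landing in distinct cosets of $\Li\Z^d+\Lj\Z^d$ relative to $\Lj\Gamma$ are disjoint. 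This gives a lower bound of the shape
\[
|\Li A+\Lj A|\ge \sum_{j}\max_i|\Li A_i+\Lj A_j|,
\]
with the maximum taken along a "graph" structure on residue classes. Irreducibility and coprimality are exactly what make this graph connected (strongly, after iteration). This mirrors how Balog--Shakan use $\gcd=1$.

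\emph{Induction step.} We prove, by strong induction on $|A|$, the bound
\[
|\Li A+\Lj A|\ge c\,|A|-C|A|^{1-1/d},\qquad c=\bigl(|\det\Li|^{1/d}+|\det\Lj|^{1/d}\bigr)^d,
\]
where $C$ is a large constant. If $A$ meets only one coset of $\Gamma$, rescale: $A\subseteq r+\Gamma$, and writing $\Gamma=M\Z^d$ the pair $(\Li M,\Lj M)$ reduces to a pair of the same type acting on a set of the same size. Coprimality lets us iterate this only boundedly many times before reaching a genuinely spread-out configuration, so this case is absorbed. Otherwise, apply the induction hypothesis to each piece $\Li A_i+\Lj A_j$. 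Here we need a two-set version of the inequality,
\[
|\Li B+\Lj C|\ge\Bigl(|\det\Li|^{1/d}|B|^{1/d}+|\det\Lj|^{1/d}|C|^{1/d}\Bigr)^d-\text{error},
\]
so the induction should be run for pairs $(B,C)$ with this Brunn--Minkowski-type shape, and the error should be $O\bigl((|B|+|C|)^{1-1/d}\bigr)$. Summing over the graph and using concavity of $t\mapsto t^{1/d}$ along a connected path then recovers $c|A|$. The errors $C|A_i|^{1-1/d}$ add up to at most $C\sum_i|A_i|^{1-1/d}$. Against this we gain a surplus: whenever the $|A_i|$ are unbalanced, Brunn--Minkowski concavity is strict.

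\emph{Main obstacle.} The hardest step is the balanced case, where all the $|A_i|$ are comparable. There $\sum_i|A_i|^{1-1/d}\approx m^{1/d}|A|^{1-1/d}$ exceeds $|A|^{1-1/d}$, so naive induction loses a constant factor. To handle it we would first try a stability argument: feed in Theorem~\ref{thm:Conlon-Lim}, or a Freiman-type structure theorem, to show that near-extremal $A$ is close to a convex lattice body $K\cap\Z^d$. For such sets we then compute $|\Li A+\Lj A|$ directly via Ehrhart/lattice-point counting, where boundary effects are exactly $O(|A|^{1-1/d})$. The total error is then controlled by a "boundary" functional $\partial(A)=|A+\{0,\pm e_k\}|-|A|\gtrsim|A|^{1-1/d}$ (discrete isoperimetry), which is subadditive under the coset decomposition. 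Tracking $\partial(A)$ in place of $|A|^{1-1/d}$ throughout the induction should close the argument.
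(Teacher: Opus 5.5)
Your proposal has two genuine gaps: the two-set inequality driving the induction is false, and the balanced case is left open.

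\textbf{The two-set inequality is false.} If $p=|\det\Li|\ge2$ and $C=\{\bm 0\}$, then $|\Li B+\Lj C|=|B|$, while your right-hand side is at least $p|B|-O(|B|^{1-1/d})$. Equal sizes do not rescue it. For $d=1$, $\Li=2$, $\Lj=3$, $B=\{0,\dots,n-1\}$ and $C=2\cdot B$, one gets $|2\cdot B+3\cdot C|=|B+3\cdot B|=4n-3$ (for $n\ge3$), against your claimed $5n-O(1)$. The weights $p^{1/d},q^{1/d}$ only emerge for a single set, from its $p\times q$ coset structure. For two different sets, what holds is the unweighted bound $(|X|^{1/d}+|Y|^{1/d})^d$.

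Relatedly, decomposing modulo $\Gamma=P_1$ alone does not give disjoint pieces, since $\Li A_i$ need not lie in one coset of $\Lj\Gamma=K_0$. The exact partition uses row parts modulo $P_1$ and column parts modulo $P_2$, giving $|\Li A+\Lj A|=\sum_{i,j}|\Li A^j+\Lj A_i|$ over all $pq$ cells. $H$ then arises by summing over the whole array, not along a path in a graph.

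\textbf{The balanced case is not resolved.} You correctly locate the difficulty there, but the proposed remedy does not work.
\begin{itemize}
\item The structure results you mention (Conlon--Lim's theorem, Freiman-type theorems) give only coarse information, such as containment in an $O(|A|)$-size progression or closeness to $K\cap\Z^d$ up to $o(|A|)$ points. That is far too weak for an error of size $|A|^{1-1/d}$.
\item An error of order $\partial(A)$ would not imply the theorem, since $\partial(A)$ can be of order $|A|$.
\item $\partial$ accumulates under coset decomposition exactly like $|A|^{1-1/d}$: a box split modulo $2\Z^d$ and rescaled has $\sum_i\partial(A_i)\approx2\,\partial(A)$.
\item Subadditivity $\partial(A)\le\sum_i\partial(A_i)$ points the wrong way for absorbing the pieces' errors.
\end{itemize}

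\textbf{What the paper does instead.} It never inducts on $|A|$, so errors do not accumulate. It bootstraps the exponent (Lemma~\ref{lem:bootstrap}): a uniform bound $H|B|-D|B|^\beta$, starting from Conlon--Lim's $\beta=1-\sigma$, is upgraded to exponent $\max\{2\beta-1,1-1/d\}$. Two ingredients are absent from your plan.
\begin{itemize}
\item Lemma~\ref{lem:hyperplane}: $|A+\mathcal TA|\le H|A|$, with $\mathcal T=\Li^{-1}\Lj$ having no invariant subspace, forces $|A\cap W|\le H^{(d+1)/2}|A|^{1-1/d}$ for every hyperplane $W$. So a piece of size $\ge\rho|A|$ cannot be covered by $c\rho|A|^{1/d}$ parallel hyperplanes. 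The discrete Brunn--Minkowski theorem of van Hintum--Keevash--Tiba then bounds each mixed sum $\MM_1B_t^j+\MM_2B_{t,i}$ \emph{directly} by $(a_i^{1/d}+b_j^{1/d})^d-O(N^{1-1/d})$ (Lemma~\ref{lem:mixed}), with no induction hypothesis.
\item The quadratic stability of Lemma~\ref{lem:array}: $\sum_{i,j}(a_i^{1/d}+b_j^{1/d})^d\ge Hn-MG^2/n$, with $G$ the largest single-cell deficit. This is paired with the linear bound $H|B_t|+G-O(N^\beta)$, obtained by swapping one cell for its mixed sum. Either $G\gtrsim N^\beta$ and nothing is lost, or $G\lesssim N^\beta$ and the loss is only $O(N^{2\beta-1}+N^{1-1/d})$. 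In the balanced case $G$ is small, so this is exactly what handles the case you could not close.
\end{itemize}

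Your ``rescale boundedly often'' step corresponds to the paper's normalization. It minimizes $[\Z^d:\langle A\rangle_{\Z}]$ over coordinate changes, then uses Lemma~\ref{lem:small} and Conlon--Lim's Lemma 4.4, so that every part meets all row and column cosets in $\gg N$ points. Note that $(\Li M,\Lj M)$ is not a pair of the same type (its determinants are $p^2$ and $pq$). One must also renormalize on the left, as in $\MM_i=\QQ^{-1}\mathcal L_i\PP$.
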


\section{Geometric and combinatorial estimates}
We first improve the exponent
in the bound of Conlon and Lim \cite[Lemma 2.4]{CL}.

\begin{lemma}\label{lem:hyperplane}
Let $K\ge1$ and let $\mathcal T$ be an invertible linear transformation
of $\Q^d$ with no nonzero proper invariant subspace over $\Q$.
Let $A\subseteq\Q^d$ be finite and nonempty. If
$|A+\mathcal TA|\le K|A|$, then
\[
 |A\cap W|\le K^{(d+1)/2}|A|^{1-1/d}
\]
for every affine hyperplane $W$ in $\mathbb{R}^d$.
\end{lemma}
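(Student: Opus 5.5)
The plan is to reduce to a rational hyperplane and then play $A+\mathcal TA$ against fibre counts along the $d$ hyperplanes $H,\mathcal T^{-1}H,\dots,\mathcal T^{-(d-1)}H$, which meet only in $0$.

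\emph{Reduction.} If $A\cap W$ spans an affine subspace of dimension less than $d-1$, I enlarge $W$ to a hyperplane through it. Since $A\cap W\subseteq\Q^d$, I may then assume $W$ is a rational affine hyperplane. Write $W=\bm x+H$ with $H$ a linear rational hyperplane. For a rational linear subspace $U$ put
\[
m_U=\max_{\bm x}|A\cap(\bm x+U)|,
\]
and let $N_U$ be the number of cosets of $U$ meeting $A$. Trivially $|A|\le m_UN_U$.

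\emph{Key inequality.} Let $F=A\cap(\bm x+U)$ with $|F|=m_U$. The set $F+\mathcal T\bm a$ lies in the coset $\bm x+\mathcal T\bm a+U$. For $\bm a,\bm a'$ in distinct cosets of $\mathcal T^{-1}U$ these cosets are distinct, so the translates are disjoint. Hence
\[
K|A|\ge|A+\mathcal TA|\ge m_U\,N_{\mathcal T^{-1}U}. \qquad (*)
\]
Combining $(*)$ with $|A|\le m_{\mathcal T^{-1}U}N_{\mathcal T^{-1}U}$ gives $m_U\le K\,m_{\mathcal T^{-1}U}$. Iterating, I get $m_H\le K^i m_{\mathcal T^{-i}H}$ for every $i\ge0$.

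\emph{General position.} Put $H_j=\bigcap_{i<j}\mathcal T^{-i}H$, so that $H_{j+1}=H\cap\mathcal T^{-1}H_j$ and $H_{j+1}\subseteq H_j$. Suppose $H_{j+1}=H_j$. Then $H_j\subseteq\mathcal T^{-1}H_j$, so $\mathcal TH_j\subseteq H_j$, and $H_j$ is a rational $\mathcal T$-invariant subspace. It is proper because $H_j\subseteq H$, so by hypothesis $H_j=0$. Otherwise the dimension drops strictly at each step, starting from $d-1$. In either case $H_d=\bigcap_{i=0}^{d-1}\mathcal T^{-i}H=0$, and applying $\mathcal T^{-1}$ gives $\bigcap_{i=1}^{d}\mathcal T^{-i}H=0$.

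Consequently $A$ injects into the product of the $d$ quotients $\Q^d/\mathcal T^{-i}H$ for $1\le i\le d$. This gives the Loomis--Whitney-type bound
\[
|A|\le\prod_{i=1}^{d}N_{\mathcal T^{-i}H}.
\]

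\emph{Assembly.} I multiply $(*)$ over $U=\mathcal T^{-i}H$ for $i=0,\dots,d-1$:
\[
\prod_{i=0}^{d-1}m_{\mathcal T^{-i}H}\cdot\prod_{i=1}^{d}N_{\mathcal T^{-i}H}\le K^d|A|^d,
\]
and therefore $\prod_{i=0}^{d-1}m_{\mathcal T^{-i}H}\le K^d|A|^{d-1}$. Inserting $m_H\le K^im_{\mathcal T^{-i}H}$ yields
\[
m_H^d\le K^{0+1+\cdots+(d-1)}\cdot K^d|A|^{d-1}=K^{d(d+1)/2}|A|^{d-1}.
\]
Taking $d$-th roots gives $|A\cap W|\le m_H\le K^{(d+1)/2}|A|^{1-1/d}$.

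The step I expect to need the most care is the general-position step: the hypothesis on $\mathcal T$ must be used through the stabilization argument, and the reduction to a rational hyperplane is what makes that hypothesis (stated over $\Q$) applicable.
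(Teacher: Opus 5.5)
Your proposal is correct and is essentially the paper's proof. Your $m_{\mathcal T^{-i}H}$ and $N_{\mathcal T^{-i}H}$ are the paper's $|F_i|$ and $r_i$ (since $\ker(\ell\mathcal T^i)=\mathcal T^{-i}\ker\ell$), $(*)$ is the paper's inequality $|F_i|\,r_{i+1}\le K|A|$, and your injection into $\prod_{i=1}^d\Q^d/\mathcal T^{-i}H$ is the map $\Phi$. The differences are cosmetic: you prove $\bigcap_{i<d}\mathcal T^{-i}H=0$ by a dimension-drop argument instead of Cayley--Hamilton, and you telescope through the fibre maxima rather than the fibre counts. In the reduction you should say explicitly that the replacement hyperplane is chosen rational and contains the rational affine span of $A\cap W$.
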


\begin{proof}
Let $B=A\cap W$. If $B=\varnothing$, the result is true.
Otherwise, fix $\bm b_0\in B$ and let
\[
 U=\operatorname{span}_{\Q}
 \{\bm b-\bm b_0:\bm b\in B\}\subseteq W-\bm b_0.
\]
Then $\dim_{\Q}U\le d-1$.
Hence there is a nonzero linear functional $\ell:\Q^d\to\Q$
such that, for every $\bm b\in B$,
\[
0=\ell(\bm b-\bm b_0)
 =\ell(\bm b)-\ell(\bm b_0),
\]
that is,
\begin{equation}\label{eq:lB}
\ell(\bm b)=\ell(\bm b_0),\quad \bm b\in B.
\end{equation}

For each integer $i\ge0$, let
\[
 R_i=\{\ell(\mathcal T^i\bm a):\bm a\in A\},
 \qquad r_i=|R_i|,
\]
and choose $c_i\in R_i$ such that the set
\[
 F_i=\{\bm a\in A:\ell(\mathcal T^i\bm a)=c_i\}
\]
has maximum cardinality.

Fix an integer $i\ge0$. For each $t\in R_{i+1}$,
choose $\bm a_t\in A$ such that
$\ell(\mathcal T^{i+1}\bm a_t)=t$.
For distinct $s,t\in R_{i+1}$ and any $\bm f,\bm g\in F_i$,
we have
\[
 \ell\bigl(\mathcal T^i(\bm f+\mathcal T\bm a_t)\bigr)
 =c_i+t\ne c_i+s
 =\ell\bigl(\mathcal T^i(\bm g+\mathcal T\bm a_s)\bigr).
\]
Thus the sets $F_i+\mathcal T\bm a_t$ with $t\in R_{i+1}$
are pairwise disjoint subsets of $A+\mathcal TA$.
Since $r_{i+1}=|R_{i+1}|$, it follows that
\begin{equation}\label{eq:FiKA}
 |F_i|r_{i+1}\le |A+\mathcal TA|\le K|A|.
\end{equation}

By \eqref{eq:lB} and the definition of $F_i$, we have
\[
|B|
\le\bigl|\{\bm a\in A:\ell(\bm a)=\ell(\bm b_0)\}\bigr|
\le |F_0|
\]
and
\[
|A|
=\sum_{c\in R_i}
\bigl|\{\bm a\in A:\ell(\mathcal T^i\bm a)=c\}\bigr|
\le r_i|F_i|.
\]
It follows from \eqref{eq:FiKA} that
\[
 r_1\le\frac{K|A|}{|F_0|}\le \frac{K|A|}{|B|},\qquad r_{i+1}\le \frac{K|A|}{|F_i|}\le Kr_i.
\]
Hence,
\[
 r_i\le\frac{K^i|A|}{|B|}\qquad(i\ge 1).
\]

Let
\[
 V=\bigcap_{j=0}^{d-1}\ker(\ell\mathcal T^j).
\]
We first show that $V$ is $\mathcal T$-invariant.
By the Cayley--Hamilton theorem, there exist
$a_0,\ldots,a_{d-1}\in\Q$ such that
\[
 \mathcal T^d=-\sum_{j=0}^{d-1}a_j\mathcal T^j.
\]
For any $\bm v\in V$, we have
$\ell(\mathcal T^j\bm v)=0$ for $0\le j\le d-1$, and hence
\[
 \ell(\mathcal T^d\bm v)
 =-\sum_{j=0}^{d-1}a_j\ell(\mathcal T^j\bm v)=0.
\]
It follows that
\[
 \ell\bigl(\mathcal T^j(\mathcal T\bm v)\bigr)
 =\ell(\mathcal T^{j+1}\bm v)=0
 \qquad (0\le j\le d-1),
\]
so $\mathcal T\bm v\in V$.
Thus $V$ is a rational $\mathcal T$-invariant subspace.
Since $\ell\ne0$, we have $V\subseteq\ker\ell\ne\Q^d$.
Since $\mathcal T$ has no nonzero proper invariant subspace, it follows that $V=\{\bm 0\}$.

Now consider the linear map
\[
 \Phi:\Q^d\longrightarrow\Q^d,\qquad
 \Phi(\bm x)=
 \bigl(\ell(\mathcal T\bm x),\ldots,
       \ell(\mathcal T^d\bm x)\bigr).
\]
If $\Phi(\bm x)=\bm 0$, then $\mathcal T\bm x\in V=\{\bm 0\}$.
Since $\mathcal T$ is invertible, this implies $\bm x=\bm 0$.
Hence $\Phi$ is injective.
Moreover, $\Phi(A)\subseteq R_1\times\cdots\times R_d$, so
\[
 |A|=|\Phi(A)|
 \le \prod_{i=1}^d r_i
 \le \prod_{i=1}^d\frac{K^i|A|}{|B|}
 =\frac{K^{d(d+1)/2}|A|^d}{|B|^d},
\]
that is,
\[
 |A\cap W|=|B|\le K^{(d+1)/2}|A|^{1-1/d}.
\]
This completes the proof of Lemma \ref{lem:hyperplane}.
\end{proof}

For earlier discrete versions of the Brunn--Minkowski inequality,
see Gardner and Gronchi \cite{GG} and Green and Tao \cite{GT}. Now we apply Lemma~\ref{lem:hyperplane} and the discrete
Brunn--Minkowski inequality of
van Hintum, Keevash and Tiba \cite[Theorem 1.2]{VKT} to improve the error exponent in the bound of Conlon and Lim \cite[Lemma 2.5]{CL}.

\begin{lemma}\label{lem:mixed}
Let $\Li,\Lj$ be irreducible and coprime linear transformations from
$\Z^d$ to $\Z^d$, and let
\[
 p=|\det\Li|,\qquad q=|\det\Lj|,\qquad
 H=(p^{1/d}+q^{1/d})^d.
\]
Let $A\subseteq\Z^d$ be finite and nonempty.
Suppose that $|\Li A+\Lj A|\le H|A|$.
For each $\rho>0$, there is a constant $K=K(d,p,q,\rho)$ such that
\begin{equation}\label{eq:mixed}
 |X+Y|\ge\bigl(|X|^{1/d}+|Y|^{1/d}\bigr)^d-K|A|^{1-1/d}
\end{equation}
whenever $X,Y\subseteq\Z^d$ are invertible affine images of subsets
of $A$ and $|X|,|Y|\ge\rho |A|$.
\end{lemma}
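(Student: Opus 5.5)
The plan is to combine the discrete Brunn--Minkowski inequality of van Hintum, Keevash and Tiba \cite[Theorem 1.2]{VKT} with Lemma~\ref{lem:hyperplane}. In the form we need, \cite[Theorem 1.2]{VKT} says the following. For finite $X,Y\subseteq\Z^d$ with $|X|,|Y|$ comparable (ratio bounded in terms of $\rho$), the deficit satisfies
\[
 |X+Y|\ge\bigl(|X|^{1/d}+|Y|^{1/d}\bigr)^d-C\,\bigl(|X|+|Y|\bigr)\,\theta^{\delta},
\]
where $\theta$ measures how much of $X$ (or $Y$) can be covered by few parallel hyperplanes, or by a lower-dimensional structure. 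An equivalent, cleaner formulation bounds the error by $O\bigl(\max_W |X\cap W|+\max_W|Y\cap W|\bigr)$, the maximum being over affine hyperplanes $W$, with constants depending on $d$ and the ratio $|X|/|Y|$. I would invoke it in this second form, which the phrase ``improve the error exponent'' suggests is what is intended. It then suffices to show that every affine hyperplane meets $X$ and $Y$ in at most $O(|A|^{1-1/d})$ points.

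\textbf{Step 1: small doubling for a single transformation.} I would set up Lemma~\ref{lem:hyperplane}. Irreducibility implies $\Li,\Lj$ are invertible; otherwise $U=\ker\Li$ gives an obstruction after a suitable choice of $V$, so I would argue this as in \cite{CL}. Put $\mathcal T=\Li^{-1}\Lj$ over $\Q$. Then
\[
|A+\mathcal TA|=|\Li A+\Lj A|\le H|A|.
\]
Irreducibility of the pair means exactly that $\mathcal T$ has no nonzero proper rational invariant subspace: if $\mathcal TU=U$, take $V=\Li U$, so that $\Li U\subseteq V$ and $\Lj U=\Li\mathcal TU\subseteq V$. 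Lemma~\ref{lem:hyperplane} with $K=H$ then gives
\[
|A\cap W|\le H^{(d+1)/2}|A|^{1-1/d}
\]
for every affine hyperplane $W$.

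\textbf{Step 2: transfer to $X$ and $Y$.} Write $X=\phi(A')$ with $A'\subseteq A$ and $\phi$ an invertible affine map. An affine hyperplane $W$ pulls back to the affine hyperplane $\phi^{-1}(W)$, so
\[
|X\cap W|=|A'\cap\phi^{-1}W|\le|A\cap\phi^{-1}W|\le H^{(d+1)/2}|A|^{1-1/d}.
\]
The same holds for $Y$. Note that the bound is independent of $\phi$, which is why the statement allows arbitrary invertible affine images.

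\textbf{Step 3: apply VKT.} The hypotheses $|X|,|Y|\ge\rho|A|$ and $|X|,|Y|\le|A|$ give a ratio bounded by $\rho^{-1}$. The VKT constant is therefore uniform in $d,\rho$, and the hyperplane bound of Step 2 yields \eqref{eq:mixed} with $K$ depending on $d,p,q,\rho$.

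\textbf{Main obstacle.} The delicate point is the exact shape of \cite[Theorem 1.2]{VKT}. If it is stated as ``either $|X+Y|\ge(|X|^{1/d}+|Y|^{1/d})^d-C\,\Delta$, or $X$ is covered by $O(1)$ parallel hyperplanes'' with $\Delta$ a hyperplane-concentration quantity, I must convert the structural alternative into a contradiction. A covering of $X$ by $O_{d,\rho}(1)$ parallel hyperplanes forces some hyperplane to contain at least $c\rho|A|$ points of $X$. This contradicts Step 2 once $|A|$ exceeds a constant depending on $d,p,q,\rho$. For small $|A|$ the inequality \eqref{eq:mixed} holds trivially after enlarging $K$, since the right-hand side is then $O(|A|)\le K|A|^{1-1/d}$. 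If instead VKT's error involves the number of hyperplanes needed to cover $X$, I would compute that $X$ needs at least $|X|/\max_W|X\cap W|\gtrsim|A|^{1/d}$ hyperplanes and feed this in, checking that the exponent comes out exactly $1-1/d$.
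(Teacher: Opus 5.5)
Your proposal is correct and follows the paper's proof. The paper applies Lemma~\ref{lem:hyperplane} with $\mathcal T=\Li^{-1}\Lj$ and $K=H$, transfers the hyperplane bound to invertible affine images, and then uses \cite[Theorem 1.2]{VKT} in exactly your ``number of hyperplanes'' form: with $|X|\ge|Y|$, the set $Y$ cannot be covered by $m\asymp\rho H^{-(d+1)/2}|A|^{1/d}$ parallel hyperplanes, so $|X+Y|^{1/d}\ge|X|^{1/d}+|Y|^{1/d}-C$; expanding via $(a-C)^d\ge a^d-dCa^{d-1}$ and absorbing small $|A|$ into $K$ finishes.

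One small correction: for $d=1$ irreducibility is vacuous and one of $\Li,\Lj$ may be $0$, so your claim that irreducibility forces invertibility fails and $\mathcal T$ need not exist. The paper handles $d=1$ separately via $|X+Y|\ge|X|+|Y|-1$.
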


\begin{proof}
For $d=1$, the conclusion follows from
$|X+Y|\ge |X|+|Y|-1$. Next, we assume that $d\ge2$.
Since $\Li,\Lj$ are irreducible, $\Li^{-1}\Lj$ has no nonzero
proper invariant subspace over $\Q$.
Since $\mathcal L_1$ is invertible, we have
\[
 |A+\mathcal L_1^{-1}\mathcal L_2A|
 =|\mathcal L_1A+\mathcal L_2A|
 \le H|A|.
\]
Applying Lemma~\ref{lem:hyperplane} with
$\mathcal T=\mathcal L_1^{-1}\mathcal L_2$ and $K=H$, we obtain
\[
 |A\cap W|\le H^{(d+1)/2}|A|^{1-1/d}
\]
for every real affine hyperplane $W$.
Moreover, if $A'\subseteq A$ and $\phi$ is an invertible
affine transformation of $\mathbb R^d$, then
$\phi^{-1}(W)$ is also a real affine hyperplane, so
\[
 |\phi(A')\cap W|
 =|A'\cap\phi^{-1}(W)|
 \le H^{(d+1)/2}|A|^{1-1/d}.
\]

We may assume that $|X|\ge|Y|$. For sufficiently large $|A|$, let
\[
 m=\left\lfloor\frac{|Y|}{2H^{(d+1)/2}|A|^{1-1/d}}\right\rfloor.
\]
Then
\[
 m\ge\frac{\rho}{4H^{(d+1)/2}}|A|^{1/d},\qquad
 mH^{(d+1)/2}|A|^{1-1/d}\le\frac{|Y|}{2}.
\]
If $Y$ can be covered by $m$ parallel hyperplanes $W_1,\cdots,W_m$, then
\[
|Y|
\le\sum_{j=1}^{m}|Y\cap W_j|
\le mH^{(d+1)/2}|A|^{1-1/d}
\le\frac{|Y|}{2},
\]
a contradiction.
Hence $Y$ cannot be covered by $m$ parallel hyperplanes.
By \cite[Theorem 1.2]{VKT}, with dimension $d$ and $t=1$, we have
\begin{align}\label{eq:X+Y}
 |X+Y|^{1/d}
 &\ge |X|^{1/d}+|Y|^{1/d}-O_d\bigl(|Y|^{1/d}/m\bigr)\notag\\
 &\ge |X|^{1/d}+|Y|^{1/d}-O_{d,p,q,\rho}(1),
\end{align}
where the last inequality follows from
\[
\frac{|Y|^{1/d}}m
\le
\frac{|A|^{1/d}}
{\dfrac{\rho}{4H^{(d+1)/2}}|A|^{1/d}}
=
\frac{4H^{(d+1)/2}}{\rho}.
\]
Thus, there is a constant $C=C(d,p,q,\rho)>0$ such that
\[
 |X+Y|^{1/d}\ge |X|^{1/d}+|Y|^{1/d}-C
\]
for all sufficiently large $|A|$. Choose $N_0=N_0(d,p,q,\rho)\ge1$ sufficiently large that
this estimate holds whenever $|A|\ge N_0$ and
\[
 2\rho^{1/d}N_0^{1/d}>C.
\]
For $|A|\ge N_0$, we have
\[
 |X|^{1/d}+|Y|^{1/d}-C
 \ge 2\rho^{1/d}|A|^{1/d}-C>0.
\]
By the mean value theorem and the bounds $|X|,|Y|\le |A|$,
\begin{align*}
 |X+Y|
 &\ge \bigl(|X|^{1/d}+|Y|^{1/d}-C\bigr)^d\\
 &\ge \bigl(|X|^{1/d}+|Y|^{1/d}\bigr)^d
      -dC\bigl(|X|^{1/d}+|Y|^{1/d}\bigr)^{d-1}\\
 &\ge \bigl(|X|^{1/d}+|Y|^{1/d}\bigr)^d
      -d\,2^{d-1}C|A|^{1-1/d}.
\end{align*}
It remains to consider $1\le |A|<N_0$.
In this case,
\[
 \bigl(|X|^{1/d}+|Y|^{1/d}\bigr)^d
 \le 2^d|A|
 \le 2^dN_0^{1/d}|A|^{1-1/d}.
\]
Taking
\[
 K=\max\bigl\{d\,2^{d-1}C,\;2^dN_0^{1/d}\bigr\},
\]
we see that the right-hand side of \eqref{eq:mixed}
is nonpositive when $|A|<N_0$, so \eqref{eq:mixed}
holds in this case as well. This completes the proof of Lemma \ref{lem:mixed}.
\end{proof}

For the first inequality in the next lemma, we use the argument of
Conlon and Lim \cite[proof of Lemma 4.2]{CL}.

\begin{lemma}\label{lem:array}
Let $d\ge2$ and $p,q\ge1$ be integers, and let
\[
 H=(p^{1/d}+q^{1/d})^d.
\]
Let $(c_{ij})$ be a $p\times q$ array of nonnegative real numbers with
\[
 n=\sum_{i,j}c_{ij}>0.
\]
Let
\[
 a_i=\sum_jc_{ij},\qquad b_j=\sum_ic_{ij},\qquad G=\max_{i,j}\bigl\{(a_i^{1/d}+b_j^{1/d})^d-Hc_{ij}\bigr\}.
\]
Then $G\ge0$ and
\begin{equation}\label{eq:array}
 \begin{split}
\max_{i,j}c_{ij}-\min_{i,j}c_{ij}&\le(H+1)G,\\
 \sum_{i,j}(a_i^{1/d}+b_j^{1/d})^d&\ge Hn-MG^2/n,
 \end{split}
\end{equation}
where $M>0$ depends only on $d,p,q$.
Zero rows and columns are allowed.
\end{lemma}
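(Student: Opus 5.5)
The plan is to prove the two inequalities in \eqref{eq:array} in order, treating the first (with $G\ge0$) as a stability statement and deriving the second from it by a second-order Taylor expansion. Throughout write $f(a,b)=(a^{1/d}+b^{1/d})^d$. This function is continuous on $[0,\infty)^2$, smooth on the open quadrant, positively homogeneous of degree $1$ and concave. Two computations will be used repeatedly. First, in the uniform case $c_{ij}=n/(pq)$ one has $a_i=n/p$, $b_j=n/q$ and
\[
 pq\,f(n/p,n/q)=\bigl((pq)^{1/d}(p^{-1/d}+q^{-1/d})\bigr)^d=Hn ,
\]
so every cell gives equality $f(a_i,b_j)=Hc_{ij}$. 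Second, the defining property of $G$ is $Hc_{ij}\ge f(a_i,b_j)-G$ for all $i,j$.

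\textbf{Step 1 ($G\ge0$ and the oscillation bound).} I would follow Conlon and Lim \cite[proof of Lemma 4.2]{CL} and look at extremal cells. For $G\ge0$, I would pick a row $i_0$ and a column $j_0$ chosen extremally (first try: $a_{i_0}=\max_i a_i$, $b_{j_0}=\max_j b_j$). Then I would compare $Hc_{i_0j_0}$ with $f(a_{i_0},b_{j_0})$, using $c_{i_0j_0}\le\min(a_{i_0},b_{j_0})$ and the averages $a_{i_0}\ge n/p$, $b_{j_0}\ge n/q$.

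For the oscillation bound, let $c_{\max}=c_{i_1j_1}$ and $c_{\min}=c_{i_2j_2}$. The inequality $Hc_{ij}\ge f(a_i,b_j)-G$ summed along row $i_2$ and column $j_2$ bounds $a_{i_2},b_{j_2}$ from above in terms of $c_{\min}$ and $G$. Monotonicity and concavity of $f$ then compare $f(a_{i_2},b_{j_2})$ with $f(a_{i_1},b_{j_1})\ge Hc_{\max}$. The aim is the linear bound $c_{\max}-c_{\min}\le(H+1)G$.

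I expect Step 1 to be the main obstacle. A naive global argument fails: the inequality $\sum_{i,j}f(a_i,b_j)\ge Hn$ is false in general, since a single nonzero cell gives a smaller sum when $p,q$ are large. So the argument must genuinely use the cellwise lower bound $Hc_{ij}\ge f(a_i,b_j)-G$ at well-chosen cells rather than a summed inequality. I would first try to make the extremal-cell comparison work, and fall back on the exact Conlon--Lim argument if it does not.

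\textbf{Step 2 (the trivial regime).} Fix a small $\varepsilon=\varepsilon(d,p,q)>0$. If $G\ge\varepsilon n$, then $MG^2/n\ge M\varepsilon^2 n\ge Hn$ once $M\ge H/\varepsilon^2$. Since the left-hand side of the second inequality is nonnegative, it holds trivially. So it remains to treat $G<\varepsilon n$.

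\textbf{Step 3 (perturbative regime).} Write $c_{ij}=n/(pq)+e_{ij}$ with $\sum_{i,j}e_{ij}=0$. Step 1 gives $|e_{ij}|\le c_{\max}-c_{\min}\le(H+1)G$, since $n/(pq)$ lies between $c_{\min}$ and $c_{\max}$. Then
\[
 a_i=n/p+\alpha_i,\qquad b_j=n/q+\beta_j,\qquad \sum_i\alpha_i=\sum_j\beta_j=0,
\]
with $|\alpha_i|,|\beta_j|=O(G)\le O(\varepsilon n)$. Choosing $\varepsilon$ small keeps every $(a_i,b_j)$ in a compact neighbourhood of $(n/p,n/q)$ on which $f$ is smooth.

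Expand $S=\sum_{i,j}f(a_i,b_j)$ to second order about the uniform point. The zeroth-order term is $Hn$. The first-order term is
\[
 q\,\partial_1f(n/p,n/q)\sum_i\alpha_i+p\,\partial_2f(n/p,n/q)\sum_j\beta_j=0 .
\]
By degree-$1$ homogeneity the second derivatives of $f$ near $(n/p,n/q)$ are $O_{d,p,q}(1/n)$. Hence the remainder is $O(G^2/n)$, which gives $S\ge Hn-MG^2/n$. Zero rows and columns need no special treatment: in this regime all $a_i,b_j$ are comparable to $n$, and in Step 2 nothing about them is used.
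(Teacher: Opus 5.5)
Your Steps 2 and 3 are sound once the first inequality is available. The gap is Step 1: neither $G\ge0$ nor the oscillation bound is actually proved, and both routes you sketch fail.

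\emph{The case $G\ge0$.} The cell with maximal row sum and maximal column sum does not work. Take $p=q=2$ with a single nonzero entry $c_{11}=n$. Then $f(a_1,b_1)-Hc_{11}=2^dn-2^{d+1}n<0$, and $G=n$ is attained only at $(1,2)$ and $(2,1)$.

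\emph{The oscillation bound.} You invoke $f(a_{i_1},b_{j_1})\ge Hc_{\max}$, but this goes the wrong way. Since $G$ is a maximum, its definition gives only lower bounds $Hc_{ij}\ge f(a_i,b_j)-G$ on the entries. The same example has $f(a_1,b_1)<Hc_{\max}$. An upper bound on entries can only come from a small row or column sum. Summing the cellwise inequality along row $i_2$ gives $\sum_j f(a_{i_2},b_j)\le Ha_{i_2}+qG$, which does not bound $a_{i_2}$ from above. So the first half of the lemma is left open; falling back on Conlon--Lim is a deferral, not an argument.

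The missing idea, which is the Conlon--Lim argument the paper uses, is to work from the minimal cell. Put $m=\min c_{ij}=c_{i_2j_2}$. Then $a_i\ge qm$ and $b_j\ge pm$ for all $i,j$. Expanding $f$ binomially and using $H=p+q+\sum_{r=1}^{d-1}\binom dr p^{1-r/d}q^{r/d}$ gives
\[
 f(a_i,b_j)\ge Hm+(a_i-qm)+(b_j-pm)\qquad\text{for all }i,j.
\]
At $(i_2,j_2)$ this yields $G\ge(a_{i_2}-qm)+(b_{j_2}-pm)\ge0$, so every entry of row $i_2$ is at most $m+G$. At $(i_2,j)$ it yields $b_j-pm\le H(c_{i_2j}-m)+G\le(H+1)G$, so every entry of every column is at most $m+(H+1)G$.

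With this in place, your route for the second inequality is a valid alternative to the paper's. You split off the trivial regime $G\ge\varepsilon n$ and Taylor-expand around the uniform point, using homogeneity to bound the Hessian by $O(1/n)$. The paper needs no case split or compactness. It uses the global inequality $t^s\ge1+s(t-1)-(1-s)(t-1)^2$ for $t\ge0$, with $t=pa_i/n$, $t=qb_j/n$ and $s=r/d$, which gives an explicit $M$. Your argument gives a less explicit $M=M(d,p,q)$ but is equally correct.
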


\begin{proof}
By permuting rows and columns, we may assume that
$m=\min_{i,j}c_{ij}=c_{11}$. Since $a_i\ge qm$ and $b_j\ge pm$,
the identity
\[
 (a_i^{1/d}+b_j^{1/d})^d
 =a_i+b_j+\sum_{r=1}^{d-1}\binom dr a_i^{r/d}b_j^{1-r/d}
\]
gives
\begin{align*}
(a_i^{1/d}+b_j^{1/d})^d
&\ge a_i+b_j+
  m\sum_{r=1}^{d-1}\binom dr q^{r/d}p^{1-r/d}\\
&=Hm+(a_i-qm)+(b_j-pm).
\end{align*}
For $(i,j)=(1,1)$, this gives $G\ge0$ and $a_1-qm\le G$.
Hence $c_{1j}\le m+G$ for every $j$. With $i=1$, we then have
\[
 b_j-pm\le H(c_{1j}-m)+G\le(H+1)G.
\]
Consequently,
\[
 m\le c_{ij}\le m+(H+1)G
\]
for all $i,j$. This proves the first inequality in \eqref{eq:array}.

Let $0<s<1$. By the convexity of $t^{s-1}$ for $t>0$, we have
\[
 t^s\ge(2-s)t-(1-s)t^2
      =1+s(t-1)-(1-s)(t-1)^2.
\]
By continuity, this also holds at $t=0$.
Apply the inequality with $t=pa_i/n$ and sum over $i$.
Together with the concavity of $t^s$, this gives
\[
 0\le p^{1-s}n^s-\sum_i a_i^s
 \le(1-s)(n/p)^{s-2}\sum_i(a_i-n/p)^2.
\]
Similarly,
\[
 0\le q^sn^{1-s}-\sum_j b_j^{1-s}
 \le s(n/q)^{-1-s}\sum_j(b_j-n/q)^2.
\]
It follows that
\begin{align*}
 p^{1-s}q^sn-
 \Bigl(\sum_i a_i^s\Bigr)\Bigl(\sum_jb_j^{1-s}\Bigr)
 &\le \frac{(1-s)p^{2-s}q^s}{n}\sum_i(a_i-n/p)^2\\
 &\quad+\frac{sp^{1-s}q^{1+s}}{n}\sum_j(b_j-n/q)^2.
\end{align*}
By the first inequality in \eqref{eq:array}, we have
\begin{align*}
 \sum_i(a_i-n/p)^2&\le pq^2(H+1)^2G^2,\\
 \sum_j(b_j-n/q)^2&\le qp^2(H+1)^2G^2.
\end{align*}
Taking $s=r/d$, multiplying by $\binom dr$ and summing over
$1\le r\le d-1$, we obtain the second inequality in
\eqref{eq:array}, since
\[
 \sum_{i,j}(a_i+b_j)=(p+q)n
\]
and
\[
 p+q+\sum_{r=1}^{d-1}\binom dr p^{1-r/d}q^{r/d}=H.
\]
This completes the proof.
\end{proof}

\section{Proof of Theorem~\ref{thm:main}}\label{sec:pair}
When $d=1$, write $\mathcal L_i x=\lambda_i x$ with
$\lambda_i\in\Z$. Coprimeness gives
$\gcd(\lambda_1,\lambda_2)=1$.
If one coefficient is zero, the other is $\pm1$, and the
conclusion is immediate. Otherwise, the conclusion follows
from \cite[Theorem 1.1]{Shakan}.
We therefore assume throughout the rest of this section
that $d\ge2$. Irreducibility then implies that both
$\mathcal L_1$ and $\mathcal L_2$ are invertible over $\Q$.
Let $\Li,\Lj$ be irreducible and coprime linear transformations from
$\Z^d$ to $\Z^d$.
Throughout this section, let
\[
 p=|\det\Li|,\qquad q=|\det\Lj|,\qquad
 H=(p^{1/d}+q^{1/d})^d.
\]

For a full-rank sublattice $\Lambda$ of $\Z^d$, write $\Z^d/\Lambda$
for its quotient group and $[\Z^d:\Lambda]$ for its index. Let
\begin{equation}\label{eq:lattices}
 \begin{split}
 K_0&=\Li\Z^d\cap\Lj\Z^d,\\
 P_1&=\Z^d\cap\Lj^{-1}\Li\Z^d,\qquad
 P_2=\Z^d\cap\Li^{-1}\Lj\Z^d,\qquad P_0=P_1\cap P_2.
 \end{split}
\end{equation}
By \cite[Section 4]{CL},
\begin{equation}\label{eq:indices}
 \begin{gathered}
 \relax[\Z^d:P_1]=p,\qquad [\Z^d:P_2]=q,\\
 [\Z^d:P_0]=[\Z^d:K_0]=pq,\qquad P_1+P_2=\Z^d.
 \end{gathered}
\end{equation}
Let $\PP,\QQ$ be $d\times d$ integer matrices whose columns form $\Z$-bases
of $P_0,K_0$, respectively. The linear transformations
\begin{equation}\label{eq:M}
 \MM_i=\QQ^{-1}\mathcal L_i\PP\qquad(i=1,2)
\end{equation}
have integer entries and form an irreducible, coprime pair.
Moreover, $|\det\MM_1|=p$ and $|\det\MM_2|=q$; see
\cite[proof of Lemma 4.5, Case 2]{CL}.

For $A\subseteq\Z^d$, let $A_1,\ldots,A_p$ and $A^1,\ldots,A^q$
be its intersections with the cosets of $P_1$ and $P_2$, respectively,
including empty intersections. Let
\[
 A_{ij}=A_i\cap A^j.
\]
Call $A_i$ and $A^j$ the row and column parts, respectively.
By \eqref{eq:indices}, these $pq$ intersections are the parts of $A$
modulo $P_0$. The nonempty sets $\Li A^j+\Lj A_i$ lie in distinct
cosets of $K_0$; see \cite[proof of Lemma 4.2]{CL}. Hence
\begin{equation}\label{eq:partition}
 |\Li A+\Lj A|=\sum_{i,j}|\Li A^j+\Lj A_i|,
\end{equation}
where a summand is zero if either set is empty. Since
$A_{kl}= A_k\cap A^l$, we also have
\begin{equation}\label{eq:replace}
 |\Li A+\Lj A|\ge |\Li A^j+\Lj A_i|
 +\sum_{(k,l)\ne(i,j)}|\Li A_{kl}+\Lj A_{kl}|
\end{equation}
whenever $A_i,A^j\ne\varnothing$.

\Needspace{16\baselineskip}
The argument in \cite[proof of Lemma 4.2]{CL} gives the following
lemma. We include the proof to specify the constants.

\begin{lemma}\label{lem:small}
Let $1-1/d\le\beta<1$ and $D>0$. Suppose that
\begin{equation}\label{eq:old}
 |\Li B+\Lj B|\ge H|B|-D|B|^\beta
\end{equation}
holds uniformly for all finite $B\subseteq\Z^d$. Let
\[
 \delta=\frac1{8\max(p,q)H^2}.
\]
If $A_i,A^j\ne\varnothing$ and $|A_{ij}|\le\delta|A|$ for some
$i,j$, then
\begin{equation}\label{eq:small}
 |\Li A+\Lj A|\ge H|A|+H\delta|A|-Dpq|A|^\beta.
\end{equation}
Here $A_{ij}$ may be empty.
\end{lemma}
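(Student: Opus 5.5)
The plan is to combine the replacement inequality \eqref{eq:replace} with the hypothesis \eqref{eq:old}, applied to each part $A_{kl}$. The one summand that is not controlled by \eqref{eq:old} is then bounded below by the trivial estimate $|\Li X+\Lj Y|\ge\max(|X|,|Y|)$ for nonempty $X,Y$.

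First, for any pair $(i',j')$ with $A_{i'},A^{j'}\ne\varnothing$, apply \eqref{eq:old} to every $A_{kl}$ with $(k,l)\ne(i',j')$. Since $0\le\beta<1$, we have $|A_{kl}|^\beta\le|A|^\beta$, and there are at most $pq$ such parts. Inserting this into \eqref{eq:replace} gives
\[
 |\Li A+\Lj A|\ge |\Li A^{j'}+\Lj A_{i'}|+H\bigl(|A|-|A_{i'j'}|\bigr)-Dpq|A|^\beta .
\]
Empty parts contribute $0$ on both sides, so they cause no problem. Comparing with \eqref{eq:small}, it therefore suffices to find a pair $(i',j')$ with $A_{i'},A^{j'}\ne\varnothing$ and
\[
 |\Li A^{j'}+\Lj A_{i'}|-H|A_{i'j'}|\ge H\delta|A|.
\]

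Case 1: $\max(|A_i|,|A^j|)\ge 2H\delta|A|$. Take $(i',j')=(i,j)$. Since $\Li,\Lj$ are invertible for $d\ge2$, we get $|\Li A^j+\Lj A_i|\ge\max(|A_i|,|A^j|)\ge 2H\delta|A|$. Together with $|A_{ij}|\le\delta|A|$, this gives the required excess $H\delta|A|$.

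Case 2: $|A_i|<2H\delta|A|$ (and likewise $|A^j|$ is small). Keep the row $i$, which is nonempty by assumption. Choose a column $l$ with $|A^l|\ge|A|/q$, which exists by pigeonhole over the $q$ cosets of $P_2$; in particular $A^l\ne\varnothing$. Then $A_{il}\subseteq A_i$, so $|A_{il}|<2H\delta|A|$. We also have $|\Li A^l+\Lj A_i|\ge|A^l|\ge|A|/q$. Hence the excess for $(i,l)$ is at least
\[
 \frac{|A|}{q}-2H^2\delta|A|\ge H\delta|A|,
\]
because $(2H^2+H)\delta\le3H^2\delta=\frac{3}{8\max(p,q)}\le\frac1q$. (Symmetrically, one could instead keep the column $j$ and pick a large row.)

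The main point needing care is Case 2. Here the designated pair $(i,j)$ itself gives no gain, and one must switch to a different pair while still having a nonempty row. This is exactly why the hypothesis $A_i\ne\varnothing$ is needed, and why $\delta$ carries the factor $1/\max(p,q)$. The remaining steps are routine bookkeeping of constants.
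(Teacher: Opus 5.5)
Your proposal is correct and follows essentially the paper's argument. You combine \eqref{eq:replace} with \eqref{eq:old} on the remaining parts, and you obtain a mixed sum with excess at least $H\delta|A|$ from the trivial bound $|\Li X+\Lj Y|\ge\max(|X|,|Y|)$, switching to a large row or column when the designated one is small. The differences are only cosmetic. The paper splits on $|A^j|$ alone, and in the small case it keeps column $j$ and picks a row with $|A_{i'}|\ge|A|/p$; you keep row $i$ and pick a column with $|A^l|\ge|A|/q$.
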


\begin{proof}
If $|A^j|\ge2H\delta|A|$, then
\[
 |\Li A^j+\Lj A_i|-H|A_{ij}|
 \ge |A^j|-H|A_{ij}|\ge H\delta|A|.
\]
If $|A^j|<2H\delta|A|$, choose $i'$ with $|A_{i'}|\ge|A|/p$.
Since $|A_{i'j}|\le|A^j|<2H\delta|A|$, we have
\begin{align*}
 |\Li A^j+\Lj A_{i'}|-H|A_{i'j}|
 &\ge |A_{i'}|-H|A_{i'j}|\\
 &>|A|/p-2H^2\delta|A|
 \ge 3|A|/(4p)\ge H\delta|A|.
\end{align*}
In either case, use the corresponding mixed sum in \eqref{eq:replace} and apply
\eqref{eq:old} to the remaining intersections. There are at most
$pq$ such intersections, each of size at most $|A|$, so
\[
 |\Li A+\Lj A|\ge H|A|+H\delta|A|-Dpq|A|^\beta.
\]
\end{proof}

For the next lemma, we use the lattice reduction in
\cite[proof of Lemma 4.5]{CL}, together with
Lemmas~\ref{lem:mixed} and~\ref{lem:array}.

\begin{lemma}\label{lem:bootstrap}
Suppose that \eqref{eq:old} holds uniformly for some $D>0$ and
$1-1/d\le\beta<1$. Then there is a constant
$D'=D'(d,p,q,D,\beta)$ such that
\begin{equation}\label{eq:new}
 |\Li A+\Lj A|\ge H|A|-D'|A|^{\max\{2\beta-1,1-1/d\}},
\end{equation}
for every such pair and every finite nonempty
$A\subseteq\Z^d$.
\end{lemma}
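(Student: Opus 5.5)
We argue by a case split driven by Lemma~\ref{lem:small}, and in the main case combine Lemmas~\ref{lem:mixed} and~\ref{lem:array} through the decomposition \eqref{eq:partition}. Fix $A$ and assume $|\Li A+\Lj A|\le H|A|$; otherwise \eqref{eq:new} is trivial. First suppose some $A_i,A^j\ne\varnothing$ satisfy $|A_{ij}|\le\delta|A|$. Then Lemma~\ref{lem:small} gives $|\Li A+\Lj A|\ge H|A|+H\delta|A|-Dpq|A|^\beta$. This exceeds $H|A|$ once $|A|$ is large in terms of $D,p,q,\beta$, and small $|A|$ is absorbed into $D'$. We may therefore assume that every $A_{ij}$ with nonempty row and column satisfies $|A_{ij}|>\delta|A|$.

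The degenerate sub-case, where some row or column is empty, needs separate handling. Here I would use the lattice reduction of \cite[proof of Lemma 4.5]{CL}: if all of $A$ lies in one coset of $P_1$ or of $P_2$, or more generally in a union that forces $A$ into a coset of a proper sublattice, we pass to a translate and to the pair $\MM_1,\MM_2$ of \eqref{eq:M} (or its analogue) and induct on $|A|$ or on a lattice index. I expect this to be routine bookkeeping borrowed from \cite{CL}. Once it is done, we may assume all $pq$ parts satisfy $|A_{ij}|>\delta|A|$, so all $a_i=|A_i|$ and $b_j=|A^j|$ are at least $\delta|A|$.

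In the main case, set $c_{ij}=|A_{ij}|$, so that $n=|A|$. Since $\Li$ is invertible, $|\Li A^j+\Lj A_i|=|A^j+\Li^{-1}\Lj A_i|$. Because $A^j$ and $\Li^{-1}\Lj A_i$ may be non-integral, I would translate both by suitable elements and apply a map in the spirit of \eqref{eq:M} to land both in $\Z^d$; we need them to be invertible affine images of subsets of $A$, as required in Lemma~\ref{lem:mixed}. Lemma~\ref{lem:mixed} with $\rho=\delta$ then gives
\[
|\Li A^j+\Lj A_i|\ge (a_i^{1/d}+b_j^{1/d})^d-K|A|^{1-1/d}.
\]
Summing over $(i,j)$ in \eqref{eq:partition} and using the second inequality of \eqref{eq:array}, we get $|\Li A+\Lj A|\ge Hn-MG^2/n-Kpq|A|^{1-1/d}$. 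It remains to bound $G$. Take $(i,j)$ attaining $G$, and use \eqref{eq:replace} with the mixed term bounded below by Lemma~\ref{lem:mixed} and the other terms bounded by \eqref{eq:old}. This gives
\[
H|A|\ge |\Li A+\Lj A|\ge Hc_{ij}+G-K|A|^{1-1/d}+\sum_{(k,l)\ne(i,j)}\bigl(Hc_{kl}-Dc_{kl}^\beta\bigr),
\]
hence $G\le K|A|^{1-1/d}+Dpq|A|^\beta\ll|A|^\beta$, since $\beta\ge1-1/d$. Therefore $MG^2/n\ll|A|^{2\beta-1}$, and combining gives \eqref{eq:new} with exponent $\max\{2\beta-1,1-1/d\}$.

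The main obstacle is justifying the application of Lemma~\ref{lem:mixed} to the mixed sums $\Li A^j+\Lj A_i$. We must exhibit $X,Y\subseteq\Z^d$ that are invertible affine images of $A^j$ and $A_i$ with $|X+Y|=|\Li A^j+\Lj A_i|$. Concretely, I would take $X=\Li A^j$ and $Y=\Lj A_i$ after translating into $K_0$ and applying $\QQ^{-1}$. The hypothesis of Lemma~\ref{lem:mixed} is stated for $A$ itself with the pair $\Li,\Lj$, so this is consistent, and it remains only to check that $\Li$, $\Lj$ and $\QQ^{-1}$ compose to invertible affine maps, which I expect to hold.
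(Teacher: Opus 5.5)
\textbf{There is a genuine gap.} Your argument for the case where $A$ meets all $p$ row cosets and all $q$ column cosets is correct. Indeed you may take $X=\Li A^j$ and $Y=\Lj A_i$ directly, since these already lie in $\Z^d$. The gap is the ``degenerate sub-case'', which you defer as routine bookkeeping. It is in fact the heart of the lemma, and the routes you suggest do not handle it.

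After Lemma~\ref{lem:small}, the occupied parts $A_{ij}$ form a full rectangle $I\times J$ of rows and columns. However, nothing forces $|I|=p$ and $|J|=q$. Moreover $I$ and $J$ may generate $\Z^d/P_1$ and $\Z^d/P_2$, so $A$ need not lie in a coset of any proper sublattice, and no rescaling applies. The phenomenon is visible already for $d=1$. Take $(\lambda_1,\lambda_2)=(2,3)$ and $A=\{0\le x\le N: x\not\equiv 2 \pmod 3\}$. Then $\gcd(A)=1$, the four occupied classes modulo $6$ are all heavy, and $|2A+3A|\le 5|A|+O(1)$. Yet the column class $2+3\Z$ is empty.

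In such a case your top-level computation breaks down:
\begin{itemize}
\item An empty column $j_0$ gives $G\ge a_i>\delta|A|$, so $MG^2/n$ is of order $|A|$.
\item The terms $(a_i^{1/d}+0)^d=a_i$ coming from $j_0$ in $\sum_{i,j}(a_i^{1/d}+b_j^{1/d})^d$ have no counterpart in \eqref{eq:partition}, where those summands are $0$.
\item Inducting on $|A|$ over the parts $A_t$ does not repair this. With $\gamma=\max\{2\beta-1,1-1/d\}<1$, the resulting error $D'\sum_t|A_t|^\gamma$ exceeds $D'|A|^\gamma$ by a fixed factor $>1$ whenever there are at least two heavy parts, because $x\mapsto x^\gamma$ is strictly subadditive. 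So the constant grows at every step.
\item Inducting on the lattice index does not help either: the index of $\langle\PP^{-1}(A_t-\bm v_t)\rangle_\Z$ need not be smaller than that of $\langle A\rangle_\Z$.
\end{itemize}

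The missing idea is to abandon the top level and establish fullness one level down, inside each part. The paper proceeds as follows.
\begin{itemize}
\item It normalizes so that $[\Z^d:\langle A\rangle_\Z]$ is minimal over all triples $(\bm S^{-1}\Li\bm R,\bm S^{-1}\Lj\bm R,\bm R^{-1}A)$. This is why \eqref{eq:old} must hold uniformly over all irreducible coprime pairs with determinants $p,q$.
\item It shows that the parts of $A$ modulo the finer lattices $\Lambda_h$ are heavy.
\item It shows that $\phi_{h,1}(X_h)+\phi_{h,2}(X_h)=\phi_h(X_h)$, since otherwise an extra heavy mixed sum appears.
\item It shows that $X_h$ generates $\Z^d/\Lambda_h$, using the minimality of the index.
\item It then invokes \cite[Lemma 4.4]{CL} to obtain \eqref{eq:all-fibres}. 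This says each $B_t=\PP^{-1}(A_t-\bm v_t)$ meets all $p$ rows and all $q$ columns for $\MM_1,\MM_2$, with parts of size $>\delta^2N$.
\end{itemize}
Your mixed-plus-array argument is then run on each $B_t$ with the pair $\MM_1,\MM_2$. One sums only the diagonal pieces $|\Li A_t+\Lj A_t|$, which lie in distinct cosets of $K_0$. This never requires the top-level rectangle to be full.
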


\begin{proof}
Suppose that, for some $D>0$ and $1-1/d\le\beta<1$,
\eqref{eq:old} holds with the same constant $D$ for every
finite $B\subseteq\Z^d$ and every irreducible, coprime pair
$\Li,\Lj$ satisfying
$|\det\Li|=p$ and $|\det\Lj|=q$.

Let $\delta$ be as in Lemma~\ref{lem:small} and let $N=|A|$.
We may assume that
\[
 |\Li A+\Lj A|<HN,\qquad N\ge N_0,
\]
where $N_0=N_0(d,p,q,D,\beta)$ is sufficiently large.
Take $N_0>H^{d(d+1)/2}$. By Lemma~\ref{lem:hyperplane},
applied with $\mathcal T=\Li^{-1}\Lj$ and $K=H$, the set $A$ is not
contained in an affine hyperplane.

By translation, we may assume that $\bm0\in A$. Consider all triples
obtained by changing coordinates separately in the domain and range:
\[
 (\bm S^{-1}\Li\bm R,\ \bm S^{-1}\Lj\bm R,\ \bm R^{-1}A),
\]
where $\bm R,\bm S$ are invertible $d\times d$ rational matrices,
$|\det\bm R|=|\det\bm S|$, both $\bm S^{-1}\Li\bm R$ and $\bm S^{-1}\Lj\bm R$
have integer entries,
and $\bm R^{-1}A\subseteq\Z^d$.
The transformed pair is irreducible and coprime, and $p,q,N$ and
$|\Li A+\Lj A|$ are unchanged. Since $A$ spans $\Q^d$, the subgroup
$\langle A\rangle_{\Z}$ generated by $A$ has finite index in $\Z^d$.
Choose a triple for which this index is least, and retain the same
notation. Define the corresponding lattices and basis matrices by
\eqref{eq:lattices}--\eqref{eq:M}.

Denote the nonempty sets $A_{ij}$ by $A_t$. If
$0<|A_t|\le\delta N$ for some $t$, then Lemma~\ref{lem:small} gives
\[
 |\Li A+\Lj A|\ge HN+H\delta N-DpqN^\beta\ge HN
\]
for sufficiently large $N$, a contradiction. Thus
\begin{equation}\label{eq:heavy-root}
 |A_t|>\delta N\qquad\hbox{for every }t.
\end{equation}

Let
\begin{equation}\label{eq:fine-lattices}
 \Lambda_1=P_0\cap\Lj^{-1}\Li P_0,\qquad
 \Lambda_2=P_0\cap\Li^{-1}\Lj P_0.
\end{equation}
By \cite[Lemma 4.3]{CL}, applied also with the transformations exchanged,
\[
 [\Z^d:\Lambda_1]=p^2q,\qquad [\Z^d:\Lambda_2]=pq^2.
\]
For $h,i\in\{1,2\}$, define
\[
 \phi_{h,i}:\Z^d/\Lambda_h\longrightarrow\Z^d/(\mathcal L_hP_0),
 \qquad
 \phi_{h,i}(\bm u+\Lambda_h)=\mathcal L_i\bm u+\mathcal L_hP_0.
\]
By \cite[Lemma 4.3]{CL}, with the roles of $\mathcal L_1$
and $\mathcal L_2$ interchanged when $h=2$, these
homomorphisms are well defined, and
$\phi_h=\phi_{h,1}+\phi_{h,2}$ is an isomorphism.

For each $t$, choose $\bm v_t\in A_t$ and let
\begin{equation}\label{eq:B}
 B_t=\PP^{-1}(A_t-\bm v_t).
\end{equation}
Then $B_t\subseteq\Z^d$, $|B_t|=|A_t|$, and
\[
 |\Li A_t+\Lj A_t|=|\MM_1B_t+\MM_2B_t|.
\]
The row and column lattices for $\MM_1,\MM_2$ satisfy
\begin{equation}\label{eq:local-marginals}
 \Z^d\cap\MM_2^{-1}\MM_1\Z^d=\PP^{-1}\Lambda_1,\qquad
 \Z^d\cap\MM_1^{-1}\MM_2\Z^d=\PP^{-1}\Lambda_2.
\end{equation}

Every nonempty part of $A$ modulo either $\Lambda_h$ has size greater
than $\delta^2N$. For otherwise, a nonempty part contained in some
$A_t$ has size at most $\delta^2N$. By
\eqref{eq:local-marginals}, it corresponds to a nonempty row or column
part of $B_t$. It contains a nonempty row--column intersection of size at most
$\delta^2N<\delta|A_t|$. Lemma~\ref{lem:small} gives
\[
 |\Li A_t+\Lj A_t|
 \ge H|A_t|+H\delta|A_t|-Dpq|A_t|^\beta.
\]
The sets $\Li A_s+\Lj A_s$ lie in distinct cosets of $K_0$.
Applying \eqref{eq:old} to the other parts and using
\eqref{eq:heavy-root}, we obtain
\[
 |\Li A+\Lj A|\ge HN+H\delta^2N-2DpqN^\beta\ge HN
\]
for sufficiently large $N$, a contradiction. Hence
\begin{equation}\label{eq:heavy-fine}
 |A\cap(\bm u+\Lambda_h)|>\delta^2N
 \quad\hbox{if }A\cap(\bm u+\Lambda_h)\ne\varnothing,
 \qquad h=1,2.
\end{equation}

Let $X_h$ be the image of $A$ in $\Z^d/\Lambda_h$. We have
\[
 \phi_h(X_h)\subseteq\phi_{h,1}(X_h)+\phi_{h,2}(X_h).
\]
Suppose that the set on the right has size greater than $|X_h|$.
Since $\phi_h$ is injective, there are two nonempty parts $D_1,D_2$
of $A$ modulo $\Lambda_h$ such that $\Li D_1+\Lj D_2$ lies in a
coset of $\mathcal L_hP_0$ different from all those containing
$\Li D+\Lj D$, where $D$ runs through these parts. The latter cosets
are distinct. By \eqref{eq:heavy-fine},
\[
 |\Li D_1+\Lj D_2|>\delta^2N.
\]
There are at most $pq\max(p,q)$ parts modulo $\Lambda_h$, so
\eqref{eq:old} gives
\[
 |\Li A+\Lj A|\ge HN+\delta^2N-Dpq\max(p,q)N^\beta\ge HN
\]
for sufficiently large $N$, a contradiction. Therefore
\begin{equation}\label{eq:no-growth}
 \phi_{h,1}(X_h)+\phi_{h,2}(X_h)=\phi_h(X_h)
 \qquad(h=1,2).
\end{equation}
Choose $N_0$ large enough for the preceding three estimates to give
contradictions. Since $\beta<1$, this choice depends only on $d,p,q,D,\beta$.

We now show that $X_h$ generates $\Z^d/\Lambda_h$, using the
normalization argument in \cite[proof of Lemma 4.5, Case 1]{CL}.
Suppose otherwise, and let
\[
 E=\langle A\rangle_{\Z}+\Lambda_h,\qquad
 E'=\langle\Li A+\Lj A\rangle_{\Z}+\mathcal L_hP_0.
\]
By \eqref{eq:no-growth},
\[
 E/\Lambda_h=\langle X_h\rangle,\qquad
 E'/(\mathcal L_hP_0)=\phi_h(\langle X_h\rangle).
\]
Since $\phi_h$ is an isomorphism, we have
\[
 [\Z^d:E]=[\Z^d:E']>1.
\]
Moreover, $\Li E,\Lj E\subseteq E'$, since $\bm0\in A$ gives
$\mathcal L_iA\subseteq\Li A+\Lj A$ and, by definition,
$\mathcal L_i\Lambda_h\subseteq\mathcal L_hP_0$.
Let $\bm R,\bm S$ have columns forming $\Z$-bases of $E,E'$, respectively.
Then
\[
 (\bm S^{-1}\Li\bm R,\ \bm S^{-1}\Lj\bm R,\ \bm R^{-1}A)
\]
is one of the triples considered above, but
\[
 [\Z^d:\langle\bm R^{-1}A\rangle_{\Z}]
 =[E:\langle A\rangle_{\Z}]
 =\frac{[\Z^d:\langle A\rangle_{\Z}]}{[\Z^d:E]}
 <[\Z^d:\langle A\rangle_{\Z}].
\]
This contradicts the choice of the triple.

Let $x\in X_h$. Since $0\in X_h$, the set $X_h-x$ contains both
$0$ and $-x$. The subgroup it generates contains $x$ and hence every element of
$X_h$, so it is $\Z^d/\Lambda_h$. By
\eqref{eq:no-growth},
\[
 |\phi_{h,1}(X_h-x)+\phi_{h,2}(X_h-x)|=|X_h-x|.
\]
Apply \cite[Lemma 4.4]{CL} to $X_h-x$, exchanging the transformations when
$h=2$. The first two alternatives are excluded, and hence
\begin{equation}\label{eq:all-fibres}
 x+(P_0/\Lambda_h)\subseteq X_h
 \qquad(x\in X_h,\ h=1,2).
\end{equation}
By \eqref{eq:local-marginals} and \eqref{eq:all-fibres}, each
$B_t$ meets all $p$ row cosets and all $q$ column cosets for
$\MM_1,\MM_2$. By \eqref{eq:heavy-fine}, each of these
row and column parts has size greater than $\delta^2N$.

Fix $t$, and let $B_{t,i}$ and $B_t^j$ be the row and column parts
of $B_t$. Put
\[
 c_{ij}=|B_{t,i}\cap B_t^j|,\qquad
 a_i=|B_{t,i}|,\qquad b_j=|B_t^j|,
\]
\[
 G=\max_{i,j}\{(a_i^{1/d}+b_j^{1/d})^d-Hc_{ij}\}.
\]
The sets
$\MM_1B_t^j$ and $\MM_2B_{t,i}$ lie in $\Z^d$ and are invertible
affine images of subsets of $A$. By Lemma~\ref{lem:mixed},
with $\rho=\delta^2$, there is a constant $K=K(d,p,q)$
such that
\[
 |\MM_1B_t^j+\MM_2B_{t,i}|\ge (a_i^{1/d}+b_j^{1/d})^d-KN^{1-1/d}
 \qquad(1\le i\le p,\ 1\le j\le q).
\]
Applying \eqref{eq:partition} to $\MM_1,\MM_2,B_t$ and using
Lemma~\ref{lem:array}, we obtain
\begin{equation}\label{eq:quadratic}
 |\MM_1B_t+\MM_2B_t|\ge H|B_t|-MG^2/|B_t|-pqKN^{1-1/d}.
\end{equation}
Choose $(i,j)$ such that $G=(a_i^{1/d}+b_j^{1/d})^d-Hc_{ij}$. By
\eqref{eq:replace}, using this mixed sum and applying \eqref{eq:old}
to the remaining intersections, we also have
\begin{equation}\label{eq:linear}
 |\MM_1B_t+\MM_2B_t|\ge H|B_t|+G-DpqN^\beta-KN^{1-1/d}.
\end{equation}
This holds even if $c_{ij}=0$, since the corresponding row and
column parts are nonempty.

If $G\ge DpqN^\beta+KN^{1-1/d}$, then \eqref{eq:linear} gives
\[
 |\MM_1B_t+\MM_2B_t|\ge H|B_t|.
\]
If $G<DpqN^\beta+KN^{1-1/d}$, then $G<(Dpq+K)N^\beta$, since
$\beta\ge1-1/d$. By \eqref{eq:quadratic} and $|B_t|>\delta N$, we have
\[
 |\MM_1B_t+\MM_2B_t|\ge H|B_t|
 -\frac{M(Dpq+K)^2}{\delta}N^{2\beta-1}-pqKN^{1-1/d}.
\]
Thus the latter bound holds in either case. Summing over $t$ and
using the disjointness of $\Li A_t+\Lj A_t$, we obtain
\[
 |\Li A+\Lj A|\ge HN
 -\frac{pqM(Dpq+K)^2}{\delta}N^{2\beta-1}-(pq)^2KN^{1-1/d}.
\]
Thus \eqref{eq:new} holds for $N\ge N_0$ if
\[
 D'\ge\frac{pqM(Dpq+K)^2}{\delta}+(pq)^2K.
\]
Taking also $D'\ge HN_0^{1/d}$, we obtain the same bound for
$1\le N<N_0$. The constant $D'$ depends only on $d,p,q,D,\beta$.
\end{proof}

\begin{proof}[Proof of Theorem~\ref{thm:main}]
By \cite[Theorem 4.6]{CL}, \eqref{eq:old} holds uniformly with
$\beta=1-\sigma$, where $D>0$ and $\sigma>0$ depend only on $d,p,q$.
By decreasing $\sigma$ if necessary, we may assume that $\sigma\le1/d$.
After $r$ applications of Lemma~\ref{lem:bootstrap}, the error exponent is
\[
 \max\{1-2^r\sigma,1-1/d\}.
\]
Choose $r$ such that $2^r\sigma\ge1/d$. We obtain
\[
 |\Li A+\Lj A|\ge H|A|-C_{d,p,q}|A|^{1-1/d}.
\]
At each application, the new constant depends only on $d,p,q$ and
the preceding constant and exponent. The number of applications also
depends only on $d,p,q$. Thus $C_{d,p,q}$ depends only on $d,p,q$.
\end{proof}

\begin{remark}
Theorem~\ref{thm:main} holds with the same constant $C_{d,p,q}$ for
finite nonempty $A\subseteq\R^d$. By the coordinatewise Freiman
embedding in \cite[proof of Theorem 6.3]{CL}, with order sufficiently
large in terms of the coefficients of $\Li,\Lj$, there is a set $A'\subseteq\Z^d$
such that
\[
 |A'|=|A|,\qquad |\Li A'+\Lj A'|=|\Li A+\Lj A|.
\]
Apply Theorem~\ref{thm:main} to $A'$. The Freiman embedding used here
is given in \cite[Lemma 5.25]{TV}.
\end{remark}

\end{document}